 \newtheorem{theorem}{Theorem}[section]
 \newtheorem{corollary}[theorem]{Corollary}
 \theoremstyle{definition}
 \newtheorem{definition}[theorem]{Definition}
 \theoremstyle{remark}
 \newtheorem{example}{Example}
 \numberwithin{equation}{section}
\newcommand{\lb}{\langle}
\newcommand{\rb}{\rangle}
\newcommand{\Om}{\Omega}
\newcommand{\F}{\mathscr{F}}
\newcommand{\E}{\mathbb{E}}
\renewcommand{\P}{\mathbb{P}}
\renewcommand{\S}{\Sigma}
\newcommand{\n}{\Vert}
\newcommand{\g}{\gamma}
\newcommand{\s}{^*}
\let\mathcal \undefined
\def\mathcal{\mathscr}
\let\emptyset \undefined
\let\ge       \undefined
\let\le       \undefined
\begin{document}

%
%
%
%
%
%
%
%
%
\title[Vector measures of bounded $\g$-variation]{Vector measures of bounded $\g$-variation and stochastic integrals}


\author{Jan van Neerven}
\address{Delft University of Technology\\Delft Institute of Applied Mathematics\\P.O. Box 5031, 2600 GA Delft\\The Netherlands}
\email{J.M.A.M.vanNeerven@TUDelft.nl}

\thanks{The first named author is supported by VICI subsidy 639.033.604 of the Netherlands Organisation for Scientific Research (NWO)}


\author{Lutz Weis}
\address{
University of Karlsruhe\\Mathematisches Institut I \\
D-76128 Karlsruhe\\ Germany}
\email{Lutz.Weis@math.uni-karlsruhe.de}


\keywords{Vector measures, bounded randomised variation, stochastic integration} 
\subjclass[2000]{46G10, 60H05}


\begin{abstract} We introduce the class of vector measures of bounded $\g$-variation and study its relationship with vector-valued stochastic integrals with respect to Brownian motions.
\end{abstract}

\maketitle

\section{Introduction}


It is well known that stochastic integrals can be interpreted as vector measures, the identification being given by the identity
$$ F(A) = \int_A \phi\,dB.$$
Here, the driving process $B$ is a (semi)martingale (for instance, a Brownian motion), and $\phi$ is a stochastic process satisfying suitable measurability and integrability conditions. This observation has been used by various authors as the starting point of a theory of stochastic integration  for vector-valued processes.

Let $X$ be a Banach space. In \cite{NeeWei05} we characterized the class of functions $\phi:(0,1)\to X$ which are stochastically integrable with respect to a Brownian motion $(W_t)_{t\in [0,1]}$ as being the class of functions for which the operator $T_\phi: L^2(0,1)\to X$,
$$ T_\phi f := \int_0^1 f(t)\phi(t)\,dt,
$$
belongs to the operator ideal $\g(L^2(0,1),X)$ of all $\g$-radonifying operators. Indeed, we established the It\^o isomorphism
$$ \E \Big\n \int_0^1 f\,dW\Big\n^2 = \n T_f\n_{\g(L^2(0,1),X)}^2.$$
The linear subspace of all operators in $\g(L^2(0,1),X)$ of the form $T=T_f$ for some function $f:(0,1)\to X$ is dense, but unless $X$ has cotype $2$ it is strictly smaller than $\g(L^2(0,1),X)$. This means that in general there are operators $T\in \g(L^2(0,1),X)$ which are not representable by an $X$-valued function.
Since the space of test functions $\mathscr{D}(0,1)$ embeds in $L^2(0,1)$, by restriction one could still think of such operators as $X$-valued distributions. It may be more intuitive, however, to think of $T$ as an $X$-valued vector measure. We shall prove (see Theorem \ref{thm:2} and the subsequent remark) that if $X$ does not contain a closed subspace isomorphic to $c_0$, then the space  $\g(L^2(0,1),X)$ is isometrically isomorphic in a natural way to the space of $X$-valued vector measures on $(0,1)$ which are of bounded $\g$-variation. This gives a `measure theoretic' description of the class of admissible integrands for stochastic integrals with respect to Brownian motions. The condition $c_0\not\subseteq X$ can be removed if we replace the space of $\g$-radonifying operators by the larger space of all $\g$-summing operators (which contains the space of all $\g$-radonifying operators isometrically as a closed subspace).

Vector measures of bounded $\g$-variation behave quite differently from vector measures of bounded variation. For instance, the question whether an $X$-valued vector measure of bounded $\g$-variation can be represented by an $X$-valued function is not linked to the Radon-Nikod\'ym property, but rather to the type $2$ and cotype $2$ properties of $X$ (see Corollaries \ref{type} and \ref{cotype}). 

In section \ref{sec:randomised} we consider yet another class of vector measures whose variation is given by certain random sums, and we show that a function $\phi:(0,1)\to X$ is stochastically integrable with respect to a Brownian motion $(W_t)_{t\in [0,1]}$ on a probability space $(\Omega,\P)$ if and only if the formula $F(A) := \int_A \phi\,dW$
defines an $L^2(\Om;X)$-valued vector measure $F$ in this class.

\section{Vector measures of bounded $\g$-variation}

Let $(S,\S)$ be a measurable space, $X$ a Banach space, and $(\g_n)_{n\ge 1}$ a sequence of independent standard Gaussian random variables defined on a probability space $(\Om,\F,\P)$.

\begin{definition} We say that a countably additive vector measure $F$ has {\em bounded $\g$-variation} with respect to a probability measure $\mu$ on $(S,\S)$ if $\n F\n_{V_\g(\mu;X)}<\infty$, where 
$$ \n F\n_{V_\g(\mu;X)} := \sup \Big(\E \Big\n \sum_{n=1}^N \g_n \,\frac{F(A_n)}{\sqrt{\mu(A_n)}}\Big\n^2\Big)^\frac12,$$
the supremum being taken over all finite collections of disjoint sets $A_1,\dots,A_N\in \S$ such that $\mu(A_n)>0$ for all $n=1,\dots,N$.
\end{definition}

It is routine to check (e.g. by an argument similar to  \cite[Proposition 5.2]{ISEM}) that the space $V_\g(\mu;X)$ of all countably additive vector measures $F:\S\to X$ which have bounded $\g$-variation with respect to $\mu$ is a Banach space with respect to the norm $\n\cdot\n_{V_\g(\mu;X)}$. 
Furthermore, every vector measure which is of bounded $\g$-variation is of bounded $2$-semivariation.

In order to give a necessary and sufficient condition for a vector measure to have bounded $\g$-variation we need to introduce the following terminology.
A bounded operator $T:H\to X$, where $H$ is a Hilbert space, is said to be {\em $\g$-summing} if there exists a constant $C$ such that for all finite orthonormal systems $\{h_1,\dots,h_N\}$ in $H$ one has
$$ \E \Big\n\sum_{n=1}^N \g_n\,Th_n\Big\n^2 \le C^2.$$
The least constant $C$ for which this holds is called the {\em $\g$-summing norm} of $T$, notation $\n T\n_{\g_\infty(H,X)}$.
With respect to this norm, the space $\g_\infty(H,X)$ of all $\g$-summing operators from $H$ to $X$ is a Banach space which contains all finite rank operators from $H$ to $X$. In what follows we shall make free use of the elementary properties of $\g$-summing operators. For a systematic exposition of these we refer to \cite[Chapter 12]{DJT} and the lecture notes \cite{ISEM}. 

\begin{theorem}\label{thm:1}
Let $\mathscr{A}$ be an algebra of subsets of $S$ which generates the $\sigma$-algebra $\S$, and let $F:\mathscr{A}\to X$ be a finitely additive mapping. 
If, for some $1\le p<\infty$, $T:L^p(\mu)\to X$ is a bounded operator such that 
$$F(A) = T1_A, \quad A\in\mathscr{A},$$ then $F$ has a unique extension to a countably additive vector measure on $\S$ which is absolutely continuous with respect to $\mu$. 
 If $T:L^2(\mu)\to X$ is $\g$-summing, then the extension of $F$ has bounded $\g$-variation with respect to $\mu$ and we have
$$ \n F\n_{V_\g(\mu;X)} \le \n T\n_{\g_\infty(L^2(\mu),X)}.$$
\end{theorem}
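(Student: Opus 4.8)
The plan is to build the extension by hand and then read the $\g$-variation bound off the very definition of the $\g$-summing norm; no deep tool is involved.

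For the extension, note first that since $\mu$ is a probability measure we have $1_A\in L^p(\mu)$ for \emph{every} $A\in\S$, so one may simply define $\tilde F(A):=T1_A$ for $A\in\S$. On $\mathscr{A}$ this coincides with $F$ by hypothesis, and for disjoint sets $A_1,A_2,\dots\in\S$ with union $A$ one has $\n 1_A-\sum_{n\le N}1_{A_n}\n_{L^p(\mu)}^p=\mu\big(A\setminus\bigcup_{n\le N}A_n\big)\to 0$, hence $\sum_n 1_{A_n}=1_A$ in $L^p(\mu)$; applying the bounded operator $T$ gives $\tilde F(A)=\sum_n\tilde F(A_n)$, i.e.\ countable additivity. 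Absolute continuity with respect to $\mu$ is immediate from $\n\tilde F(A)\n\le\n T\n\,\mu(A)^{1/p}$. (Density of $\mathscr{A}$-simple functions in $L^p(\mu)$, while true, is not needed here: only the membership $1_A\in L^p(\mu)$ is used.)

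For uniqueness, let $G:\S\to X$ be another countably additive extension of $F$. For each $x\s\in X\s$ the scalar measures $\lb G(\cdot),x\s\rb$ and $\lb\tilde F(\cdot),x\s\rb$ are countably additive on $\S$ and agree on the algebra $\mathscr{A}$ that generates $\S$, hence agree on all of $\S$ by a monotone class argument; since $x\s$ is arbitrary, Hahn--Banach yields $G=\tilde F$. For the final assertion, assume now $T:L^2(\mu)\to X$ is $\g$-summing, fix disjoint $A_1,\dots,A_N\in\S$ with $\mu(A_n)>0$, and set $h_n:=1_{A_n}/\sqrt{\mu(A_n)}$. Disjointness gives $\lb h_m,h_n\rb_{L^2(\mu)}=\mu(A_m\cap A_n)/\sqrt{\mu(A_m)\mu(A_n)}=\delta_{mn}$, so $\{h_1,\dots,h_N\}$ is orthonormal in $L^2(\mu)$, and by the extension step (with $p=2$) we have $F(A_n)/\sqrt{\mu(A_n)}=Th_n$. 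Therefore
$$ \E\Big\n\sum_{n=1}^N\g_n\,\frac{F(A_n)}{\sqrt{\mu(A_n)}}\Big\n^2=\E\Big\n\sum_{n=1}^N\g_n\,Th_n\Big\n^2\le\n T\n_{\g_\infty(L^2(\mu),X)}^2, $$
the last inequality being exactly the defining inequality for the $\g$-summing norm applied to the orthonormal system $\{h_n\}$. Taking the supremum over all admissible finite collections $A_1,\dots,A_N$ gives $\n F\n_{V_\g(\mu;X)}\le\n T\n_{\g_\infty(L^2(\mu),X)}$.

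I do not expect a genuine obstacle here; the whole argument is bookkeeping around the identity $F(A)=T1_A$. The only point that deserves a little care is the uniqueness of the countably additive extension — the vector-valued analogue of the Carath\'eodory/$\pi$--$\lambda$ uniqueness theorem — which I would handle by scalarizing via $X\s$; everything else rests solely on the continuity of $T$ and on the definition of a $\g$-summing operator.
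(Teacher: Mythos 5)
Your proof is correct and follows essentially the same route as the paper's: define the extension by $A\mapsto T1_A$ on all of $\S$, get countable additivity from $L^p$-convergence of the indicators, prove uniqueness by scalarizing and applying the $\pi$--$\lambda$ (Dynkin) theorem plus Hahn--Banach, and obtain the $\g$-variation bound by testing the $\g$-summing norm against the orthonormal system $1_{A_n}/\sqrt{\mu(A_n)}$. The only differences are cosmetic elaborations (the explicit $L^p$ computation and the bound $\n T1_A\n\le\n T\n\,\mu(A)^{1/p}$ for absolute continuity).
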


\begin{proof}
We define the extension $F:\Sigma\to X$ by $F(A):= T1_A$, $A\in\S$.
To see that $F$ is countably additive, consider a disjoint union $A = \bigcup_{n\ge 1}A_n$ with $A_n,A\in\S$. Then
$\lim_{N\to\infty} 1_{\bigcup_{n=1}^N A_n} = 1_A$ in $L^p(\mu)$ and therefore
$$\lim_{N\to\infty} \sum_{n=1}^N F(A_n) = \lim_{N\to\infty} T\sum_{n=1}^N 1_{A_n} = T1_A = F(A).$$
The absolute continuity of $F$ is clear. 
To prove uniqueness, suppose $\tilde F:\S\to X$ is another countably additive vector measure extending $F$. For each $x\s\in X\s$,  $\lb \tilde F,x\s\rb$ and $\lb F,x\s\rb$ are finite measures on $\S$ which agree on $\mathscr{A}$, and therefore by Dynkin's lemma they agree on all of $\S$. This being true for all $x\s\in X\s$, it follows that $\tilde F=F$ by the Hahn-Banach theorem.

Suppose next that $T: L^2(\mu)\to X$ is $\g$-summing, and 
consider a finite collection of disjoint sets $A_1,\dots,A_N$ in $\S$ such that $\mu(A_n)>0$ for all $n=1,\dots,N$.
The functions $f_n = 1_{A_n}/\sqrt{\mu(A_n)}$ are orthonormal in $L^2(\mu)$ and therefore
$$ \E \Big\n \sum_{n=1}^N \g_n \,\frac{F(A_n)}{\sqrt{\mu(A_n)}}\Big\n^2
= \E \Big\n \sum_{n=1}^N \g_n \, Tf_n \Big\n^2
\le \n T\n_{\g_\infty(L^2(\mu),X)}^2.$$
It follows that $F$ has bounded $\g$-variation with respect to $\mu$
and that $\n F\n_{V_\g(\mu;X)}\le  \n T\n_{\g_\infty(L^2(\mu),X)}$. 
\end{proof}

\begin{theorem}\label{thm:2} For a countably additive vector measure $F:\S\to X$ the following assertions are equivalent:
\begin{enumerate}
\item[\rm(1)] $F$ has bounded $\g$-variation with respect to $\mu$;
\item[\rm(2)] There exists a $\g$-summing operator $T:L^2(\mu)\to X$ such that 
$$F(A) = T1_A, \quad A\in\S.$$
\end{enumerate}
In this situation we have $$\n F\n_{V_\g(\mu;X)} = \n T\n_{\g_\infty(L^2(\mu),X)}.$$
\end{theorem}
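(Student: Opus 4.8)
\emph{Overall strategy.} The implication $(2)\Rightarrow(1)$, together with the inequality $\n F\n_{V_\g(\mu;X)}\le\n T\n_{\g_\infty(L^2(\mu),X)}$, is already covered by Theorem~\ref{thm:1}: I would simply apply that theorem with $\mathscr{A}=\S$ (which is an algebra generating $\S$), so that the unique countably additive extension appearing there is $F$ itself. Everything then reduces to proving $(1)\Rightarrow(2)$ together with the opposite inequality $\n T\n_{\g_\infty(L^2(\mu),X)}\le\n F\n_{V_\g(\mu;X)}$, after which the equality of norms is immediate; the operator $T$ produced is automatically unique, being prescribed on the dense subspace of simple functions.

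\emph{Constructing $T$.} Assuming $F$ has bounded $\g$-variation with respect to $\mu$, I would first record that $F\ll\mu$: testing the definition against a functional $x\s\in X\s$ shows that the scalar measure $\lb F,x\s\rb$ satisfies $\sup_\pi\sum_i|\lb F(A_i),x\s\rb|^2/\mu(A_i)<\infty$, which by the classical scalar fact forces $\lb F,x\s\rb\ll\mu$; letting $x\s$ run over $X\s$ gives $F\ll\mu$. Then I would define $T$ on simple functions by $T1_A:=F(A)$ and linear extension --- well defined on this dense subspace of $L^2(\mu)$ because $F$ is finitely additive and $\mu$-continuous. To see that $T$ is bounded with $\n T\n\le\n F\n_{V_\g(\mu;X)}$, write a simple function as $f=\sum_{i=1}^m a_i1_{B_i}$ with the $B_i$ disjoint of positive measure and let $R:\ell^2_m\to X$ send the $i$-th unit vector to $y_i:=F(B_i)/\sqrt{\mu(B_i)}$; since the $B_i$ form one of the configurations in the supremum defining $\n F\n_{V_\g(\mu;X)}$, we have $\n R\n_{\g_\infty(\ell^2_m,X)}=(\E\n\sum_i\g_iy_i\n^2)^{1/2}\le\n F\n_{V_\g(\mu;X)}$, and since $Tf=Rc$ for the vector $c\in\ell^2_m$ with entries $a_i\sqrt{\mu(B_i)}$, for which $\n c\n=\n f\n_{L^2(\mu)}$, the $\g$-summing norm dominating the operator norm gives $\n Tf\n\le\n F\n_{V_\g(\mu;X)}\,\n f\n_{L^2(\mu)}$. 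Hence $T$ extends to a bounded operator on $L^2(\mu)$, and clearly $T1_A=F(A)$ for every $A\in\S$ (both sides vanish when $\mu(A)=0$).

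\emph{$T$ is $\g$-summing.} This is the crucial point. Given a finite orthonormal system $h_1,\dots,h_k$ in $L^2(\mu)$, I would argue that, by a routine approximation (approximate the $h_j$ by simple functions in $L^2(\mu)$ and apply Gram--Schmidt; alternatively, use conditional expectations), together with boundedness of $T$ and continuity of $(z_1,\dots,z_k)\mapsto\E\n\sum_j\g_jz_j\n^2$ on $X^k$, it suffices to bound $\E\n\sum_j\g_jTf_j\n^2$ when the $f_j$ are \emph{simple} orthonormal functions built over a common partition into sets $B_i$ of positive measure, say $f_j=\sum_ia_{ji}1_{B_i}$. Setting $b_{ji}:=a_{ji}\sqrt{\mu(B_i)}$, $B:=(b_{ji})_{j\le k,\,i\le m}$, and $R:\ell^2_m\to X$ as before, one has $Tf_j=\sum_ib_{ji}y_i=RB\s e_j$ (with $e_j$ the $j$-th standard basis vector of $\ell^2_k$), while orthonormality of the $f_j$ in $L^2(\mu)$ says exactly that $BB\s=I$, i.e.\ $B\s$ is an isometry and $\n B\s\n=1$. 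The ideal property of $\g$-summing operators then yields
$$\E\Big\n\sum_{j=1}^k\g_jTf_j\Big\n^2=\n RB\s\n_{\g_\infty(\ell^2_k,X)}^2\le\n B\s\n^2\,\n R\n_{\g_\infty(\ell^2_m,X)}^2\le\n F\n_{V_\g(\mu;X)}^2,$$
which gives $\n T\n_{\g_\infty(L^2(\mu),X)}\le\n F\n_{V_\g(\mu;X)}$ and hence, combined with Theorem~\ref{thm:1}, the claimed equality.

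\emph{Main obstacle.} The hard part is precisely this last step: the hypothesis of bounded $\g$-variation only controls Gaussian sums over indicators of \emph{disjoint} sets, and it has to be leveraged into a bound over \emph{arbitrary} orthonormal systems. The mechanism is the reduction to simple functions over a common partition, whereupon orthonormality becomes the statement that the coefficient matrix is a co-isometry, and the ideal property of $\g$-summing operators does the rest. Some care is needed to ensure the approximation step keeps the $f_j$ orthonormal (or at least keeps their Gram matrix $\le I$), which Gram--Schmidt or conditional expectation handles; and the preliminary reduction to the $\mu$-continuous case rests on the classical scalar fact mentioned above.
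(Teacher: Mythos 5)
Your argument is correct in substance and arrives at the same norm equality, but it organizes the crucial step differently from the paper. Both proofs define $T$ on simple functions by $T1_A:=F(A)$ and both ultimately rest on the same finite--dimensional observation: for simple functions over a common partition into positive-measure sets, the normalised indicators form an orthonormal basis of their span, on which the Gaussian sum is exactly one of the quantities appearing in the supremum defining $\n F\n_{V_\g(\mu;X)}$. The differences are these. For boundedness the paper uses a Cauchy--Schwarz trick, writing $\lb Tf,x\s\rb$ as the expectation of a product of two Gaussian sums; you instead factor $Tf=Rc$ through $\ell^2_m$ and use that the $\g$-summing norm dominates the operator norm --- both work. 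For the $\g$-summing estimate the paper first assumes $\S$ countably generated, exhausts it by finite $\sigma$-algebras $\S_n$, bounds $\n T\circ P_n\n_{\g_\infty}$ via the indicator basis, passes to the limit with Fatou's lemma, and then removes the countability assumption by a separate reduction; you handle an arbitrary finite orthonormal system directly, approximating it by simple orthonormal systems and exploiting that orthonormality makes the coefficient matrix a co-isometry, so that the right-ideal property of $\g_\infty$ finishes the job. Your route avoids the countably-generated case distinction altogether and is arguably cleaner, at the price of having to justify the (routine, Gram--Schmidt or conditional-expectation) approximation of orthonormal tuples by simple orthonormal tuples while keeping the Gram matrix under control; that step is standard and your sketch of it is adequate.

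One caveat, which you share with the paper but which your write-up makes explicit enough to deserve comment: the claim that bounded $\g$-variation forces $F\ll\mu$ does not follow from the ``classical scalar fact'' as you invoke it, because the supremum defining $\n F\n_{V_\g(\mu;X)}$ only sees sets of \emph{positive} measure. On a two-point space with $\mu$ concentrated at one point and $F$ charging the other ($F(\{2\})=x\ne 0$, $\mu(\{2\})=0$), one has $\n F\n_{V_\g(\mu;X)}=\n x\n<\infty$ yet no operator on $L^2(\mu)$ can satisfy $T1_{\{2\}}=F(\{2\})$. Your derivation does go through whenever $\mu$ admits positive-measure sets of arbitrarily small measure (for $\mu(B)=0$ write $\lb F(B),x\s\rb=\lb F(B\cup D),x\s\rb-\lb F(D\setminus B),x\s\rb$ with $\mu(D)>0$ small), in particular in the non-atomic setting relevant to the stochastic integrals of the paper. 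Strictly speaking, the implication (1)$\Rightarrow$(2) requires the extra (usually automatic) hypothesis that $F$ vanishes on $\mu$-null sets; the paper's own proof assumes this silently at the point where it asserts $T1_A=F(A)$ for all $A\in\S$, so this is a defect of the statement rather than of your argument, but it should be flagged rather than attributed to a classical fact that does not quite say this.
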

\begin{proof}
(1)$\Rightarrow$(2): \ Suppose that $F$ has bounded $\g$-variation with respect to $\mu$. For a simple function $f = \sum_{n=1}^N c_n 1_{A_n}$, where the sets $A_n\in\S$ are disjoint and of positive $\mu$-measure,
define
$$ Tf := \sum_{n=1}^N c_n F(A_n).$$
By the Cauchy-Schwarz inequality, for all $x\s\in X\s$ we have
$$
\begin{aligned} 
|\lb T f,x\s\rb| & =  \Big|\E \sum_{m=1}^N \g_m \,c_m\sqrt{\mu(A_m)} \cdot  \sum_{n=1}^N \g_n \,\frac{\lb F(A_n),x\s\rb}{\sqrt{\mu(A_n)}}\Big|
\\ & \le \Big(\E \Big| \sum_{n=1}^N \g_n\, c_n\sqrt{\mu(A_n)} \Big|^2\Big)^\frac12
 \Big(\E \Big| \sum_{n=1}^N \g_n\, \frac{\lb F(A_n),x\s\rb}{\sqrt{\mu(A_n)}} \Big|^2\Big)^\frac12 
\\ &  \le \Big( \sum_{n=1}^N |c_n|^2\mu(A_n)\Big)^\frac12 \n F\n_{V_\g(\mu;X)}\,\n x\s\n
\\ &  = \n f\n_{L^2(\mu)}\n F\n_{V_\g(\mu;X)}\,\n x\s\n.
\end{aligned}
$$
It follows that $T$ is bounded and $\n T\n_{\mathscr{L}(L^2(\mu),X)}\le \n F\n_{V_\g(\mu;X)}$. 
To prove that $T$ is $\g$-summing we shall first make the simplifying assumption that the $\sigma$-algebra $\S$ is countably generated. 
Under this assumption there exists an increasing sequence of finite $\sigma$-algebras $(\Sigma_n)_{n\ge 1}$ such that $\S = \bigvee_{n\ge 1} \S_n$.
Let $P_n$ be the orthogonal projection in $L^2(\mu)$ onto $L^2(\S_n,\mu)$
and put $T_n := T\circ P_n$. These operators are of finite rank and we have $\lim_{n\to\infty} T_n\to T$ in the strong operator topology of $\mathscr{L}(L^2(\mu),X)$.

Fix an index $n\ge 1$ for the moment. Since $\S_n$ is finitely generated there 
exists a partition $S = \bigcup_{j=1}^N A_j$, where the disjoint sets $A_1,\dots,A_N$ generate $\S_n$. 
Assuming that $\mu(A_j)>0$ for all $j=1,\dots,M$ and $\mu(A_j)=0$ for $j=M+1,\dots,N$, the functions $g_j = 1_{A_j}/\sqrt{\mu(A_j)}$, $j=1,\dots,M$, form an orthonormal basis for $L^2(\S_n,\mu)$ and
$$ 
\begin{aligned}
\n T_n\n_{\g_\infty(L^2(\mu),X)}^2 
& = \n T_n\n_{\g_\infty(L^2(\S_n,\mu),X)}^2 \\ 
& = \E \Big\n \sum_{j=1}^M \g_j \, Tg_j \Big\n^2 = \E \Big\n \sum_{j=1}^M \g_j \,\frac{F(A_j)}{\sqrt{\mu(A_n)}}\Big\n^2\le \n F\n_{V_\g(\mu;X)}^2,
\end{aligned}
$$
the first identity being a consequence of \cite[Corollary 5.5]{ISEM}
and the second of \cite[Lemma 5.7]{ISEM}.
It follows that the sequence $(T_n)_{n\ge 1}$ is bounded in $\g_\infty(L^2(\mu),X)$.
By the Fatou lemma, if $\{f_1,\dots,f_k\}$ is any orthonormal family in $L^2(\mu)$, then 
$$ \E \Big\n \sum_{j=1}^k \g_j T f_j\Big\n^2 \le \liminf_{n\to\infty}
\E \Big\n \sum_{j=1}^k \g_j T_n f_j\Big\n^2 \le \n T_n\n_{\g_\infty(L^2(\mu),X)}^2
\le  \n F\n_{V_\g(\mu;X)}^2.
$$
This proves that $T$ is $\g$-summing and
$\n T\n_{\g_\infty(L^2(\mu),X)}\le \n F\n_{V_\g(\mu;X)}$.

It remains to remove the assumption that $\S$ is countably generated.
The preceding argument shows that if we define $T$ in the above way, then 
its restriction to $L^2(\S',\mu)$ is $\g$-summing for every countably generated $\sigma$-algebra $\S'\subseteq\S$, with a uniform bound
$$\n T\n_{\g_\infty(L^2(\S',\mu),X)}\le \n F\n_{V_\g(\mu;X)}.$$
Since every finite orthonormal family $\{f_1,\dots,f_k\}$ in 
$L^2(\mu)$ is contained in $L^2(\S',\mu)$ for some countably generated $\sigma$-algebra $\S'\subseteq\S$, we see that
$$ 
\E \Big\n \sum_{j=1}^k \g_j\, T f_j\Big\n^2 
\le \n T\n_{\g_\infty(L^2(\S',\mu),X)}^2 \le\n F\n_{V_\g(\mu;X)}^2.
$$
It follows that $T$ is $\g$-summing and
$\n T\n_{\g_\infty(L^2(\mu),X)}\le \n F\n_{V_\g(\mu;X)}$.

(2)$\Rightarrow$(1): \ This implication is contained in Theorem \ref{thm:1}.
\end{proof}

By a theorem of Hoffmann-J{\o}rgensen and Kwapie\'n  \cite[Theorem 9.29]{LedTal}, if $X$ is a Banach space not containing an isomorphic copy of $c_0$, then for any Hilbert space $H$ one has 
$$ \g_\infty(H,X) = \g(H,X),$$
where by definition $\g(H,X)$ denotes the closure in $\g_\infty(H,X)$ of the finite rank operators from $H$ to $X$. Since any operator in this closure is compact we obtain:

\begin{corollary}
If 
$X$ does not contain an isomorphic copy of $c_0$ and $F:\S\to X$ has bounded $\g$-variation with respect to $\mu$, then $F$ has relatively compact range.
\end{corollary}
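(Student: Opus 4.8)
The plan is to combine Theorem~\ref{thm:2} with the compactness assertion recorded just before the statement. First I would apply Theorem~\ref{thm:2} to produce a $\g$-summing operator $T:L^2(\mu)\to X$ representing $F$, so that $F(A)=T1_A$ for all $A\in\S$. Since $X$ contains no isomorphic copy of $c_0$, the Hoffmann-J{\o}rgensen--Kwapie\'n theorem gives $\g_\infty(L^2(\mu),X)=\g(L^2(\mu),X)$, so $T$ lies in the closure of the finite rank operators inside $\g_\infty(L^2(\mu),X)$; as recalled in the text, every operator in this closure is compact, hence $T$ is compact.

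Next I would identify the range of $F$ as $T(\{1_A:A\in\S\})$. Because $\mu$ is a probability measure, $\n 1_A\n_{L^2(\mu)}=\sqrt{\mu(A)}\le 1$, so the family $\{1_A:A\in\S\}$ is contained in the closed unit ball of $L^2(\mu)$. A compact operator maps bounded sets to relatively compact sets, so $F(\S)=T(\{1_A:A\in\S\})$ is relatively compact in $X$, which is the assertion.

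There is no genuine obstacle here: the corollary is an immediate consequence of the representation theorem together with the stated compactness of operators in $\g(L^2(\mu),X)$. The only points deserving a moment's attention are that $\mu$ is a probability measure, so the indicators $1_A$ do lie in a bounded subset of $L^2(\mu)$, and that the hypothesis $c_0\not\subseteq X$ is precisely what licenses the passage from $\g$-summing to $\g$-radonifying, and hence to compactness of $T$.
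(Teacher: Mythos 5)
Your argument is correct and is exactly the one the paper intends: Theorem~\ref{thm:2} produces a $\g$-summing $T$ with $F(A)=T1_A$, the Hoffmann-J{\o}rgensen--Kwapie\'n theorem upgrades $T$ to $\g(L^2(\mu),X)$ and hence to a compact operator, and since $\mu$ is a probability measure the indicators lie in the unit ball, so $F$ has relatively compact range. The paper leaves this chain implicit in the remark preceding the corollary; you have simply written it out.
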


Using the terminology of \cite{NeeWei05}, a theorem of Rosi\'nski and Suchanecki  \cite{RosSuc} asserts that if $X$ has type $2$ we have
a continuous inclusion $L^2(\mu;X)\hookrightarrow \g(L^2(\mu),X)$ and that if $X$ has cotype $2$ we have a continuous inclusion $\g_\infty(L^2(\mu),X)\hookrightarrow L^2(\mu;X)$. In both cases the embedding is contractive, and the relation between the operator $T$ and the representing function $\phi$ is given by
$$ Tf = \int_S f\phi\,d\mu, \quad f\in L^2(\mu).$$
If $\dim L^2(\mu)=\infty$, then in the converse direction the existence of a continuous embedding
$L^2(\mu;X)\hookrightarrow \g_\infty(L^2(\mu),X)$ (respectively $\g(L^2(\mu),X)\hookrightarrow L^2(\mu;X)$) actually implies the type $2$ property (respectively the cotype $2$ property) of $X$. 

\begin{corollary}\label{type} Let $X$ have type $2$. For all $\phi\in L^2(\mu;X)$ the formula
$$ F(A):= \int_A \phi\,d\mu, 
\quad A\in\S,$$ defines a countably additive vector measure $F:\S\to X$ which has bounded $\g$-variation with respect to $\mu$. Moreover,
$$ \n F\n_{V_\g(\mu;X)}\le \n \phi\n_{L^2(\mu;X)}.$$
If $\dim L^2(\mu)=\infty$, this property characterises the type $2$ property of $X$.
\end{corollary}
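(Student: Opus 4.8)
The plan is to read both implications off the Rosi\'nski--Suchanecki theorem quoted just before the statement, together with Theorems \ref{thm:1} and \ref{thm:2}. The bridge in both directions is the identification of $F$ with the vector measure induced by the operator $T_\phi\in\mathscr{L}(L^2(\mu),X)$ given by $T_\phi f:=\int_S f\phi\,d\mu$, namely $F(A)=T_\phi 1_A$ for $A\in\S$ (indeed $T_\phi 1_A=\int_S 1_A\phi\,d\mu=\int_A\phi\,d\mu$).

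For the forward implication: since $X$ has type $2$, the Rosi\'nski--Suchanecki inclusion $L^2(\mu;X)\hookrightarrow\g(L^2(\mu),X)$ is contractive, so $T_\phi\in\g(L^2(\mu),X)\subseteq\g_\infty(L^2(\mu),X)$ with $\n T_\phi\n_{\g_\infty(L^2(\mu),X)}\le\n\phi\n_{L^2(\mu;X)}$. Applying Theorem \ref{thm:1} with $p=2$ and $\mathscr{A}=\S$ (here $F$ is finitely additive with $F(A)=T_\phi 1_A$) shows that $F$ is countably additive, $\mu$-continuous, and of bounded $\g$-variation with $\n F\n_{V_\g(\mu;X)}\le\n T_\phi\n_{\g_\infty(L^2(\mu),X)}\le\n\phi\n_{L^2(\mu;X)}$. (Countable additivity and $\mu$-continuity of $F$ are of course also immediate from dominated convergence.)

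For the converse, assume $\dim L^2(\mu)=\infty$ and that $F_\phi(A):=\int_A\phi\,d\mu$ is of bounded $\g$-variation with respect to $\mu$ for every $\phi\in L^2(\mu;X)$. By Theorem \ref{thm:2} there is, for each such $\phi$, a $\g$-summing operator $T$ with $T1_A=F_\phi(A)=T_\phi 1_A$ for all $A\in\S$; since $T$ and $T_\phi$ are bounded and agree on simple functions, $T=T_\phi$, so $T_\phi\in\g_\infty(L^2(\mu),X)$ for every $\phi$. The resulting everywhere-defined linear map $\phi\mapsto T_\phi$ from $L^2(\mu;X)$ to $\g_\infty(L^2(\mu),X)$ has closed graph: from $\n T_{\phi_n}f-T_\phi f\n_X\le\n f\n_{L^2(\mu)}\n\phi_n-\phi\n_{L^2(\mu;X)}$ we get $T_{\phi_n}\to T_\phi$ in operator norm whenever $\phi_n\to\phi$, and $\g_\infty$-norm convergence dominates operator-norm convergence, so any $\g_\infty$-limit of $(T_{\phi_n})$ must equal $T_\phi$. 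By the closed graph theorem the embedding $L^2(\mu;X)\hookrightarrow\g_\infty(L^2(\mu),X)$ is continuous, and since $\dim L^2(\mu)=\infty$ the converse half of the Rosi\'nski--Suchanecki theorem quoted above forces $X$ to have type $2$.

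The forward part is essentially immediate once $F$ is recognized as the measure of $T_\phi$. The one point requiring care is in the converse, where the hypothesis is merely pointwise (each $F_\phi$ is of bounded $\g$-variation) and must be upgraded to a bounded linear embedding before Rosi\'nski--Suchanecki applies; this upgrade is exactly the content of the closed graph argument, and I do not anticipate any genuine obstacle beyond it.
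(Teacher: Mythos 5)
Your proof is correct and follows essentially the same route as the paper: Rosi\'nski--Suchanecki plus Theorem \ref{thm:1} for the forward direction, and Theorem \ref{thm:2} plus the converse half of Rosi\'nski--Suchanecki for the characterisation. The only difference is your closed-graph argument in the converse, which the paper can skip because it reads ``this property'' as including the norm inequality $\n F\n_{V_\g(\mu;X)}\le\n\phi\n_{L^2(\mu;X)}$ (so the embedding is contractive by Theorem \ref{thm:2} directly); your version proves the slightly stronger statement that the qualitative hypothesis alone suffices.
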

\begin{proof} By the theorem of Rosi\'nski and Suchanecki, $\phi$ represents an operator $T\in \g(L^2(\mu),X)$ such that
$ T1_A = \int_A \phi\,d\mu = F(A)$ for all $ A\in\S.$ 
The result now follows from Theorem \ref{thm:1}.
The converse direction follows from Theorem \ref{thm:2} and the preceding remarks.
\end{proof}

\begin{corollary}\label{cotype} Let $X$ have cotype $2$. If $F:\S\to X$ has bounded $\g$-variation with respect to $\mu$, there exists a function $\phi\in L^2(\mu;X)$
such that $$F(A) = \int_A \phi\,d\mu, \quad A\in\S.$$
 Moreover,
$$  \n \phi\n_{L^2(\mu;X)}\le \n F\n_{V_\g(\mu;X)}.$$
If $\dim L^2(\mu)=\infty$, this property characterises the cotype $2$ property of $X$.
\end{corollary}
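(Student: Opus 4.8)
The plan is to run the argument of Corollary~\ref{type} in reverse, using the cotype~$2$ half of the theorem of Rosi\'nski and Suchanecki. First I would apply Theorem~\ref{thm:2} to the given measure $F$: since $F$ has bounded $\g$-variation with respect to $\mu$, there is a $\g$-summing operator $T:L^2(\mu)\to X$ with $T1_A = F(A)$ for all $A\in\S$ and $\n T\n_{\g_\infty(L^2(\mu),X)} = \n F\n_{V_\g(\mu;X)}$. Since $X$ has cotype~$2$, the quoted theorem gives a contractive inclusion $\g_\infty(L^2(\mu),X)\hookrightarrow L^2(\mu;X)$; applying it to $T$ produces a function $\phi\in L^2(\mu;X)$ with $Tf = \int_S f\phi\,d\mu$ for all $f\in L^2(\mu)$ and $\n\phi\n_{L^2(\mu;X)}\le\n T\n_{\g_\infty(L^2(\mu),X)}$. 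Putting $f=1_A$ gives $F(A) = T1_A = \int_A\phi\,d\mu$, and combining the two estimates gives $\n\phi\n_{L^2(\mu;X)}\le\n F\n_{V_\g(\mu;X)}$, which is the first assertion.

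For the converse I would assume $\dim L^2(\mu)=\infty$ together with the stated representation property, and deduce a continuous embedding $\g_\infty(L^2(\mu),X)\hookrightarrow L^2(\mu;X)$; restricting this embedding to the closed subspace $\g(L^2(\mu),X)$ and invoking the converse part of the Rosi\'nski--Suchanecki remarks recalled before the statement would then yield that $X$ has cotype~$2$. To build the embedding, given $T\in\g_\infty(L^2(\mu),X)$ I would set $F(A):=T1_A$; by Theorem~\ref{thm:2} this $F$ lies in $V_\g(\mu;X)$ with $\n F\n_{V_\g(\mu;X)}=\n T\n_{\g_\infty(L^2(\mu),X)}$, so the hypothesis supplies $\phi\in L^2(\mu;X)$ with $F(A)=\int_A\phi\,d\mu$ for all $A\in\S$ and $\n\phi\n_{L^2(\mu;X)}\le\n T\n_{\g_\infty(L^2(\mu),X)}$.

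The main (and essentially only) obstacle is checking that this $\phi$ actually represents the operator $T$, that is $Tf=\int_S f\phi\,d\mu$ for all $f\in L^2(\mu)$, and not merely that $F$ and $\phi$ agree on indicator sets. I expect to handle this by noting that both $T$ and the map $S_\phi f:=\int_S f\phi\,d\mu$ are bounded operators from $L^2(\mu)$ into $X$ --- for $S_\phi$ by H\"older's inequality, $\n S_\phi f\n\le\n f\n_{L^2(\mu)}\n\phi\n_{L^2(\mu;X)}$ --- and that they agree on all indicators $1_A$, hence on simple functions, which are dense in $L^2(\mu)$; therefore $T=S_\phi$. This identifies $T$ with $\phi$, so $T\mapsto\phi$ is a norm-one linear map $\g_\infty(L^2(\mu),X)\to L^2(\mu;X)$, completing the argument.
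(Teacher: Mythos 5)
Your proof is correct and follows essentially the same route as the paper: Theorem \ref{thm:2} produces the $\g$-summing operator $T$ with $T1_A=F(A)$, the cotype-$2$ half of the Rosi\'nski--Suchanecki theorem represents it by a function $\phi\in L^2(\mu;X)$, and the converse is reduced to the embedding criterion $\g(L^2(\mu),X)\hookrightarrow L^2(\mu;X)$ recalled before Corollary \ref{type}. The only differences are cosmetic: the paper first passes from $\g_\infty(L^2(\mu),X)$ to $\g(L^2(\mu),X)$ via the Hoffmann-J{\o}rgensen--Kwapie\'n theorem (cotype $2$ excludes $c_0$) before invoking Rosi\'nski--Suchanecki, whereas you apply the $\g_\infty$ form of the inclusion directly; and your density argument identifying $T$ with $f\mapsto\int_S f\phi\,d\mu$ makes explicit a step the paper leaves to the reader.
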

\begin{proof}
By Theorem \ref{thm:2} there exists an operator $T\in \g_\infty(L^2(\mu),X)$
such that $F(A) = T1_A$ for all $A\in\S$. Since $X$ has cotype $2$, $X$ does not contain an isomorphic copy of $c_0$ and therefore the theorem of Hoffmann-J{\o}rgensen and Kwapie\'n implies that $T\in \g(L^2(\mu),X)$.
Now the theorem of Rosi\'nski and Suchanecki shows that $T$ is represented by
a function $\phi\in L^2(\mu;X)$. The converse direction follows from Theorem \ref{thm:1} and the remarks preceding Corollary \ref{type}.
\end{proof}

\section{Vector measures of bounded randomised variation}\label{sec:randomised}

Let $(S,\S)$ be a measurable space and $(r_n)_{n\ge 1}$ a Rademacher sequence, i.e., a sequence of independent random variables with $\P(r_n = \pm 1) = \frac12$.

\begin{definition}
A countably additive vector measure $F:\S\to X$ is of {\em bounded randomised variation} if $ \n F\n_{V^{\rm r}(\mu;X)}<\infty$, where
$$ \n F\n_{V^{\rm r}(\mu;X)} = \sup\Big(\E \Big\n \sum_{n=1}^N r_n\, F(A_n)\Big\n^2\Big)^\frac12,$$
the supremum being taken over all finite collections of disjoint sets $A_1,\dots,A_N\in\S$.
\end{definition}

Clearly, if $F$ is of bounded variation, then $F$ is of bounded randomised variation. The converse fails; see Example \ref{ex:1}. If $X$ has finite cotype, standard comparison results for Banach space-valued random sums \cite{DJT, LedTal} imply that an equivalent norm is obtained when the Rademacher variables are replaced by Gaussian variables.

It is routine to check that the space $V^{\rm r}(\mu;X)$ of all countably additive vector measures $F:\S\to X$ of bounded randomised variation is a Banach space with respect to the norm $\n\cdot\n_{V^{\rm r}(\mu;X)}$.

In Theorem \ref{thm:3} below we establish a connection between measures of 
bounded randomised variation and the theory of stochastic integration. For this purpose we need the following terminology.
A {\em Brownian motion} on $(\Om,\F,\P)$ indexed by another probability space $(S,\S,\mu)$ is a mapping
$W: \S\to L^2(\Om)$ such that:
\begin{enumerate}
\item[\rm(i)] For all $A\in \S$ the random variable $W(A)$ is centred Gaussian
with variance $$\E (W(A))^2 = \mu(A);$$  
\item[\rm(ii)] For all disjoint $A,B\in\S$ the random variables $W(A)$ and $W(B)$ are independent.
\end{enumerate} 
A strongly $\mu$-measurable function $\phi:S\to X$ is {\em stochastically integrable}
with respect to $W$ if for all $x\s\in X\s$ we have $\lb \phi,x\s\rb\in L^2(\mu)$ (i.e, $f$ {\em belongs to $L^2(\mu)$ scalarly}) and for all $A\in\S$ there exists a strongly measurable random variable $Y_A:\Om\to X$ such that
for all $x\s\in X\s$ we have
$$ \lb Y_A,x\s\rb = \int_A \lb \phi,x\s\rb\,dW$$ almost surely.
Note that each $Y_A$ is centred Gaussian and therefore belongs to $L^2(\Om;X)$ by Fernique's theorem; the above equality then holds in the sense of $L^2(\Om)$. We define the {\em stochastic integral} of $\phi$ over $A$ by $\int_A\phi\,dW := Y_A.$ For more details and various equivalent definitions we refer to \cite{NeeWei05}.

\begin{theorem}\label{thm:3} Let $W:\Sigma\to L^2(\Om)$ be a Brownian motion. For a strongly $\mu$-measurable function $\phi:S\to X$ the following assertions are equivalent:
\begin{enumerate}
\item[\rm(1)] $\phi$ is stochastically integrable with respect to $W$;
\item[\rm(2)] $\phi$ belongs to $L^2(\mu)$ scalarly and there exists a countably additive vector measure $F:\S\to X$, of bounded $\g$-variation with respect to $\mu$,
such that for all $x\s\in X\s$ we have 
$$ \lb F(A),x\s\rb = \int_A \lb\phi,x\s\rb\,d\mu, \quad A\in \S;$$
\item[\rm(3)] $\phi$ belongs to $L^2(\mu)$ scalarly and there exists a countably additive vector measure $G:\S\to L^2(\Om;X)$ of bounded randomised variation such that for all $x\s\in X\s$ we have
$$ \lb G(A),x\s\rb = \int_A \lb \phi,x\s\rb\,dW, \quad A\in \S.$$
\end{enumerate}
In this situation we have
$$ \n F\n_{V_\g(\mu;X)} = \n G\n_{V^{\rm r}(\mu;L^2(\Om;X))} = \Big(\E\Big\n \int_S \phi\,dW\Big\n^2\Big)^\frac12.$$
\end{theorem}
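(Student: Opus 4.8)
The strategy is to connect all three conditions through the operator $T_\phi:L^2(\mu)\to X$ heuristically given by $T_\phi f=\int_S f\phi\,d\mu$, using Theorem~\ref{thm:2} to handle the $\g$-variation side and the It\^o isomorphism from \cite{NeeWei05} to handle the stochastic integral. Concretely, I expect to prove (1)$\Leftrightarrow$(2) first, then (1)$\Leftrightarrow$(3), and to track the three norms along the way.

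\textbf{Step 1: (1)$\Leftrightarrow$(2).} Assume (1). By the characterisation of stochastic integrability in \cite{NeeWei05}, $\phi$ belongs to $L^2(\mu)$ scalarly and the operator $T_\phi:L^2(\mu)\to X$ associated to $\phi$ (defined weakly by $\lb T_\phi f,x\s\rb=\int_S f\lb\phi,x\s\rb\,d\mu$) is $\g$-radonifying, hence in particular $\g$-summing, with $\n T_\phi\n_{\g(L^2(\mu),X)}^2=\E\n\int_S\phi\,dW\n^2$. Set $F(A):=T_\phi 1_A$. By Theorem~\ref{thm:1} this $F$ is a countably additive vector measure, and by Theorem~\ref{thm:2} it has bounded $\g$-variation with $\n F\n_{V_\g(\mu;X)}=\n T_\phi\n_{\g_\infty(L^2(\mu),X)}$; the identity $\lb F(A),x\s\rb=\int_A\lb\phi,x\s\rb\,d\mu$ is immediate from the definition of $T_\phi$. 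Conversely, assume (2). By Theorem~\ref{thm:2} there is a $\g$-summing $T:L^2(\mu)\to X$ with $F(A)=T1_A$; the weak identity in (2) forces $\lb T1_A,x\s\rb=\int_A\lb\phi,x\s\rb\,d\mu$, and since simple functions are dense and $\phi$ is scalarly in $L^2(\mu)$, $T$ is precisely the operator $T_\phi$ associated to $\phi$. Then $T_\phi$ is $\g$-summing; invoking once more the characterisation from \cite{NeeWei05} (in the $\g$-summing, not necessarily $\g$-radonifying, formulation — here the fact that $X$ is a space on which the stochastic integral theory applies is used) gives stochastic integrability of $\phi$ and $\E\n\int_S\phi\,dW\n^2=\n T_\phi\n_{\g_\infty(L^2(\mu),X)}^2=\n F\n_{V_\g(\mu;X)}^2$.

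\textbf{Step 2: (1)$\Leftrightarrow$(3).} For $A\in\S$ put $G(A):=\int_A\phi\,dW=Y_A$, an element of $L^2(\Om;X)$ by Fernique's theorem; the weak identity in (3) holds by definition of the stochastic integral. Countable additivity of $G$ follows from countable additivity of the scalar integrals $\int_\cdot\lb\phi,x\s\rb\,dW$ together with a uniform bound: for disjoint $A_1,\dots,A_N$, independence of the increments $W(A_n)$ and the fact that $\sum_n r_n 1_{A_n}$ ranges (as signs vary) over functions whose $L^2(\mu)$-norm equals $(\mu(\bigcup_n A_n))^{1/2}$ lets one compute
$$\E\Big\n\sum_{n=1}^N r_n\,G(A_n)\Big\n^2=\E\Big\n\int_S\Big(\sum_{n=1}^N r_n 1_{A_n}\Big)\phi\,dW\Big\n^2,$$
and by the It\^o isomorphism this equals $\E_r\,\n T_\phi(\sum_n r_n 1_{A_n})\n_{\g}^2$-type expressions; taking the supremum over disjoint families and over sign choices, and using that the functions $\sum_n \pm 1_{A_n}$ of a common partition form an orthonormal-up-to-scaling system, yields $\n G\n_{V^{\rm r}(\mu;L^2(\Om;X))}=\n T_\phi\n_{\g_\infty(L^2(\mu),X)}=(\E\n\int_S\phi\,dW\n^2)^{1/2}$. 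The finiteness of this supremum is equivalent to $T_\phi$ being $\g$-summing, which by Step~1 is equivalent to (1). The converse direction of (3)$\Rightarrow$(1) extracts $\phi$'s stochastic integrability from the existence of the bounded $G$ by reversing this computation.

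\textbf{Main obstacle.} The routine parts are countable additivity and the Cauchy–Schwarz/It\^o-isomorphism bookkeeping. The genuine point requiring care is the precise comparison between the randomised-variation supremum (over \emph{disjoint sets}, with Rademacher signs) and the $\g$-summing norm (over \emph{orthonormal systems}, with Gaussians): one must check that restricting to indicator functions $1_{A_n}/\sqrt{\mu(A_n)}$ of disjoint sets already computes the full $\g$-summing norm of $T_\phi$ — this is exactly the content of Theorem~\ref{thm:2}, so I would lean on it rather than redo it — and that the passage from Gaussian to Rademacher sums (needed to identify $V_\g$ with $V^{\rm r}(\cdot;L^2(\Om;X))$) is \emph{isometric} here, not merely an equivalence of norms. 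The latter works because $L^2(\Om;X)$ with the $X$-valued Gaussian already ``is'' the Gaussian average: $\E\n\sum_n r_n G(A_n)\n_{L^2(\Om;X)}^2=\E_r\E_W\n\sum_n r_n\int_{A_n}\phi\,dW\n_X^2$ and the inner $W$-average is itself Gaussian with the correct covariance, so one can transfer the signs onto the (already Gaussian) integrator and recognise the result as a $\g$-summing norm evaluation without any comparison constant. Getting this identification exactly right — so that all three displayed norms are literally equal — is where I expect to spend the most effort.
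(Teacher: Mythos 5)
Your Step 1 follows the paper's route for (1)$\Leftrightarrow$(2) exactly (Theorem \ref{thm:2} combined with the representation theorem of \cite{NeeWei05}), and (3)$\Rightarrow$(1) is indeed immediate from the definitions. The problem is the forward direction of Step 2, whose central computation does not go through as written. After the (correct) identity $\E\big\n\sum_n r_n G(A_n)\big\n^2_{L^2(\Om;X)}=\E\big\n\int_S(\sum_n r_n 1_{A_n})\phi\,dW\big\n^2$ for a fixed sign choice, you appeal to ``the It\^o isomorphism'' to rewrite this as ``$\E_r\,\n T_\phi(\sum_n r_n 1_{A_n})\n_{\g}^2$-type expressions''. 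But $T_\phi(\sum_n r_n1_{A_n})$ is a vector in $X$, not an operator; what the It\^o isomorphism actually gives is the $\g$-norm of the operator $f\mapsto\int_S f\psi\phi\,d\mu=T_\phi M_\psi$, where $M_\psi$ is multiplication by $\psi=\sum_n r_n1_{A_n}$. Likewise the remark that the functions $\sum_n\pm1_{A_n}$ form an ``orthonormal-up-to-scaling system'' does not help: for a fixed partition these $2^N$ functions are unit vectors but are not mutually orthogonal, so they do not evaluate a $\g$-summing norm. Two correct ways to close the gap are: (a) the paper's argument --- the random variables $\int_{A_n}\phi\,dW$ are independent and symmetric, so the Rademacher signs can be absorbed, giving $\tilde\E\,\E\big\n\sum_n\tilde r_n\int_{A_n}\phi\,dW\big\n^2=\E\big\n\int_{\bigcup_n A_n}\phi\,dW\big\n^2$, which is at most $\E\n\int_S\phi\,dW\n^2$ by covariance domination or the contraction principle, with equality when $\bigcup_nA_n=S$; or (b) observe that $\n M_\psi\n\le1$ (and $M_\psi$ is unitary when the $A_n$ partition $S$), so $\n T_\phi M_\psi\n_{\g_\infty(L^2(\mu),X)}\le\n T_\phi\n_{\g_\infty(L^2(\mu),X)}$ by the ideal property. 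Your closing paragraph gestures at (a) but never carries it out, and neither the upper bound for non-partition families nor its saturation is actually established, so the claimed equality of the three norms is not proved.

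A second, smaller gap: countable additivity of $G$ in $L^2(\Om;X)$ does not follow from countable additivity of the scalar measures $A\mapsto\int_A\lb\phi,x\s\rb\,dW$ ``plus a uniform bound''. The functionals induced by $x\s\in X\s$ do not exhaust the dual of $L^2(\Om;X)$, so the Orlicz--Pettis theorem is not applicable in this form; the paper invokes the $\g$-dominated convergence theorem of \cite{NeeWei05} instead. (Your handling of the $\g$-summing versus $\g$-radonifying issue in (2)$\Rightarrow$(1) is acceptable and matches the level of detail of the paper, which likewise leans on the representation results of \cite{NeeWei05} at this point.)
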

\begin{proof}
(1)$\Leftrightarrow$(2): \ This equivalence is immediate from Theorem \ref{thm:2} and the fact, proven in \cite{NeeWei05}, that $\phi$ is stochastically integrable
with respect to $W$ if and only there exists an operator $T\in \g(L^2(\mu),X)$ such that 
$$ Tf = \int_S f\phi\,d\mu, \quad f\in L^2(\mu).$$  
In this case we also have $$\n T\n_{\g(L^2(\mu),X)} = \Big(\E\Big\n \int_S \phi\,dW\Big\n^2\Big)^\frac12.$$ In view of Theorem \ref{thm:2}, this proves the identity $$\n F\n_{V_\g(\mu;X)} = \Big(\E\Big\n \int_S \phi\,dW\Big\n^2\Big)^\frac12.$$

(1)$\Rightarrow$(3): \ Define $G:\S\to L^2(\Om;X)$ by
$$ G(A) := \int_A  \phi\,dW, \quad A\in \S.$$
By the $\g$-dominated convergence theorem \cite{NeeWei05}, $G$ is countably additive.
To prove that $G$ is of bounded randomised variation we consider
disjoint sets $A_1,\dots,A_N\in\S$.
If  $(\tilde r_n)_{n\ge 1}$ is a Rademacher sequence on a probability space $(\tilde\Om,\tilde\F,\tilde \P)$, then by randomisation we have
$$
\begin{aligned}
\tilde\E \Big\n \sum_{n=1}^N \tilde r_n\, G(A_n)\Big\n_{L^2(\Om;X)}^2
& = \tilde\E \E \Big\n \sum_{n=1}^N \tilde r_n \int_{A_n} \phi\,dW\Big\n^2
\\ & =  \E \Big\n \sum_{n=1}^N \int_{A_n} \phi\,dW\Big\n^2
\le \E \Big\n \int_{S} \phi\,dW\Big\n^2,
\end{aligned}
$$
with equality if $\bigcup_{n=1}^N A_n = S$.
In the second identity we used that the $X$-valued random variables $\int_{A_n} \phi\,dW$ 
are independent and symmetric. The final inequality follows by, e.g., covariance domination \cite{NeeWei05} or an application of the contraction principle. 
It follows that $G$ is a countably additive vector measure 
of bounded randomised variation and
$$ \n G\n_{V^{\rm r}(\mu;X)} = \Big(\E\Big\n \int_S \phi\,dW\Big\n^2\Big)^\frac12.$$
   
(3)$\Rightarrow$(1): \ This is immediate from the definition of stochastic integrability. 
\end{proof}

\begin{example}\label{ex:1} If $W$ is a standard Brownian motion on $(\Om,\F,\P)$ indexed by the Borel interval $([0,1],\mathscr{B},m)$, then $W$ is a countably additive vector measure with values in $L^2(\Om)$ which is of bounded randomised variation, but of unbounded variation. The first claim follows from Theorem \ref{thm:3} since $W(A) = \int_A 1\,dW$ for all Borel sets $A$. To see that $W$ is of unbounded variation, note that for any partition $0= t_0<t_1<\dots< t_{N-1}<t_N= 1$
we have
$$ \sum_{n=1}^N \n W((t_{n-1},t_n))\n_{L^2(\Om)}= 
\sum_{n=1}^N \sqrt{t_n-t_{n-1}}.$$
The supremum over all possible partitions of $[0,1]$ is unbounded.
\end{example}

\end{document}